 \newtheorem{theorem}{Theorem}[section]
 \newtheorem{lemma}[theorem]{Lemma}
 \newtheorem{remark}[theorem]{Remark}
\begin{document}

\title{The Extended Mapping Class Group Can Be Generated by Two Torsions}

\author{Xiaoming Du}
\address{South China University of Technology,
  Guangzhou 510640, P.R.China}
\email{scxmdu@scut.edu.cn}

\keywords{mapping class group, generator, torsion}

\subjclass[2010]{57N05, 57M20, 20F38}

\thanks{This research is supported by NSFC (Grant No. 11401219). 
The author wish to thank BICMR for its hospitality and thank Boju Jiang who asked
me about the problem of finding lower bounds of the orders and
the numbers of torsion generators for the mapping class groups. 
The author also wish to thank Dingsen Yan who teach me how to draw pictures 
with the metapost package of the latex.}

\maketitle

\begin{abstract}
  Let $S_g$ be the closed oriented surface of genus $g$ and let
  $\text{Mod}^{\pm}(S_g)$ be the extended mapping class group of $S_g$.
  When the genus is at least 5, we prove that $\text{Mod}^{\pm}(S_g)$
  can be generated by two torsion elements. One of these generators is
  of order $2$, and the other one is of order $4g+2$.
\end{abstract}

\section{Introduction}

Let $S_g$ be the closed oriented surface of genus $g$. The extended mapping class group
$\text{Mod}^{\pm}(S_g)$ is defined as $\text{Homeo}^{\pm}(S_g)/\text{Homeo}_{0}(S_g)$,
the group of homotopy classes of homeomorphisms (including orientation-preserving
ones and orientation-reversing ones) of $S_g$, and the mapping class group $\text{Mod}(S_g)$
is defined by $\text{Homeo}^{+}(S_g)/\text{Homeo}_{0}(S_g)$, the group of
orientation-preserving homotopy classes of homeomorphisms of $S_g$.

For $\text{Mod}(S_g)$, Dehn and Lickorish found independently Dehn twist generating sets
of $\text{Mod}(S_g)$ \cite{De,Li}. Humphries reduced the number of Dehn twist
generators to the lowest bound \cite{Hu}. Wajnryb in \cite{Wa} found that the minimal number
of the generators (not only Dehn twist generators) for the mapping class groups is 2.

We are also interested in the torsion generating set.
McCarthy and Papadopoulos in \cite{MP} proved that $\text{Mod}(S_g)$ can be generated
by infinitely many elements of order 2 when $g \geq 3$.
Luo showed that $\text{Mod}(S_g)$ is generated by $12g+6$ elements of order $2$ when
$g \geq 3$ \cite{Lu}. Brendle and Farb reduced the number of the involution generators to
$6$ \cite{BF} and prove $\text{Mod}(S_g)$ can be generated by three torsion elements.
Kassabov reduced the number of the involution generators to 4 \cite{Ka} for $g \geq 7$.
Korkmaz in \cite{Ko1} proved $\text{Mod}(S_g)$ can be generated by two torsion elements
of order $4g+2$. Monden in \cite{Mo} proved $\text{Mod}(S_g)$ can be generated by $3$ torsion
elements of order 3.

For the extended mapping class group $\text{Mod}^{\pm}(S_g)$, Dehn-Nielsen-Baer
theorem \cite{FM} says that $\text{Mod}^{\pm}(S_g)$ is isomorphic to $\text{Out}(\pi_1(S_g))$,
the outer automorphism group of $\pi_1(S_g)$. Ivanov proved that $\text{Mod}^{\pm}(S_g)$
is the automorphism group of the curve complex \cite{Iv}. Brock and Margalit showed that
$\text{Mod}^{\pm}(S_g)$ is the automorphism group of the pants complex and it is also the
isometry group of the Teichm\"uller space under the Weil-Petersson metric (\cite{BM},\cite{Ma}).
Korkmaz showed that $\text{Mod}^{\pm}(S_g)$
can be generated by 2 elements, one of which is a Dehn twist \cite{Ko1}.
Stukow in \cite{St} proved $\text{Mod}^{\pm}(S_g)$ is generated by 3 elements of order 2.

It is an open problem if $\text{Mod}^{\pm}(S_g)$ can be generated by two torsion elements
(See \cite{Ko2}, Problem 5.3). In this paper, under the condition that the genus of the surface
is at least 5, we answer this question affirmatively:

\begin{theorem}
For $g \geq 5$, the extended mapping class group $\text{Mod}^{\pm}(S_g)$ can be generated
by 2 torsion elements. One of these generators is of order 2 and the other one is of
order $4g+2$.
\end{theorem}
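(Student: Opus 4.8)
The plan is to leverage Korkmaz's theorem that for $g\ge 3$ the ordinary group $\text{Mod}(S_g)$ is already generated by two torsion elements of order $4g+2$, and to ``fold in'' orientation reversal using a single involution. Concretely, I would fix an orientation-preserving rotation $\rho$ of order $4g+2$ (the standard symmetry of the $(4g+2)$-gon model of $S_g$, equivalently the order-$(4g+2)$ automorphism $(x,y)\mapsto(\zeta x,-y)$, $\zeta=e^{2\pi i/(2g+1)}$, of the Riemann surface $y^2=x^{2g+1}-1$) and an orientation-reversing involution $\sigma$, and I would show that $\text{Mod}^{\pm}(S_g)=\langle\sigma,\rho\rangle$. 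Since $\sigma$ has order $2$ and $\rho$ has order $4g+2$, this gives exactly the asserted generating pair.

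The conceptual backbone is the following reduction. Let $\pi:\text{Mod}^{\pm}(S_g)\to\mathbb Z/2$ be the orientation character; since $\pi(\sigma)=1$, the subgroup $H=\langle\sigma,\rho\rangle$ surjects onto $\mathbb Z/2$, so $H^{+}:=H\cap\text{Mod}(S_g)$ has index $2$ in $H$. Applying the Reidemeister--Schreier rewriting process to $H$ with the transversal $\{1,\sigma\}$ shows that $H^{+}$ is generated by $\rho$ and $\sigma\rho\sigma$. Hence it suffices to arrange
\[
\langle\,\rho,\ \sigma\rho\sigma\,\rangle=\text{Mod}(S_g),
\]
for then $H^{+}=\text{Mod}(S_g)$ and, because $H$ also contains the orientation-reversing element $\sigma$, we get $H=\text{Mod}^{\pm}(S_g)$. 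Note that $\sigma\rho\sigma$ is automatically orientation-preserving of order $4g+2$, so this is precisely the statement that $\rho$ and a reflected copy of $\rho$ generate $\text{Mod}(S_g)$.

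The key point is to choose $\sigma$ so that it does \emph{not} normalize $\langle\rho\rangle$. The naive reflections compatible with the polygon (or complex conjugation on $y^2=x^{2g+1}-1$) all satisfy $\sigma\rho\sigma=\rho^{-1}$, which only produces the dihedral group $\langle\rho,\sigma\rangle$ of order $2(4g+2)$, far too small. I would therefore select an orientation-reversing involution $\sigma$ whose fixed locus is transverse to the rotational symmetry of $\rho$, so that $\sigma\rho\sigma$ is a genuinely different order-$(4g+2)$ element (a rotation about the distinct center $\sigma(P)$, where $P$ is the center of $\rho$). The aim is then to compute the orientation-preserving word $\sigma\rho\sigma\rho^{-1}$ and verify that it is a single Dehn twist $t_{a}$ about an explicit nonseparating curve $a$, chosen so that the orbit $\{\rho^{i}(a)\}$ contains a Humphries chain $a_1,\dots,a_{2g+1}$. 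Granting this, $\langle\rho,\sigma\rho\sigma\rangle=\langle\rho,t_a\rangle$ contains $t_{\rho^{i}(a)}=\rho^{i}t_a\rho^{-i}$ for all $i$, hence contains the Humphries generators and equals $\text{Mod}(S_g)$; alternatively one identifies $\{\rho,\sigma\rho\sigma\}$ directly with Korkmaz's torsion generating pair.

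The main obstacle is exactly this geometric and computational step: exhibiting a concrete pair $(\sigma,\rho)$ for which the short word $\sigma\rho\sigma\rho^{-1}$ collapses to a single Dehn twist about a curve with the right orbit behavior. This forces an explicit curve-and-arc picture on $S_g$ and a verification, via the change-of-coordinates principle and Alexander's method (direct tracking of a filling arc system), that the reflected rotation differs from $\rho$ by one twist. The genus hypothesis $g\ge 5$ should enter precisely here, providing enough room for the chain $a_1,\dots,a_{2g+1}$ and the transverse involution to coexist, and one then checks the small-genus-sensitive facts that $\rho$ and $\sigma\rho\sigma$ genuinely generate and that $\sigma\rho\sigma\neq\rho^{\pm1}$. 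Once the Dehn twist is produced, the passage to all of $\text{Mod}^{\pm}(S_g)$ is formal via the Reidemeister--Schreier reduction above.
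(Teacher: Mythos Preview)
Your reduction via the orientation character and Reidemeister--Schreier is fine, but the proposal has a genuine gap at exactly the point you flag as ``the main obstacle'': you never construct an orientation-reversing involution $\sigma$ for which $\sigma\rho\sigma\rho^{-1}$ is a single Dehn twist, and there is good reason to doubt that such a $\sigma$ exists. If $\sigma$ were any geometric symmetry sharing a hyperbolic structure with $\rho$, the commutator $\sigma\rho\sigma\rho^{-1}$ would be finite order, hence never a Dehn twist; and if one instead aims for $\sigma\rho\sigma$ to equal Korkmaz's second order-$(4g+2)$ generator, note that in Korkmaz's construction that generator differs from $\rho$ by a product $t_{a}t_{b}^{-1}$ of a twist and an inverse twist, not by a single twist. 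So the specific mechanism you propose (collapse to one $t_a$, then $\rho$-orbit gives a Humphries chain) does not match the known generating pairs, and you have no candidate $\sigma$ on the table. Everything in your outline is formal except this step, and this step is the whole theorem.

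The paper avoids this difficulty by choosing the order-$2$ generator differently: it is \emph{not} a bare reflection but the product $\tau\circ B$ of the obvious polygon reflection $\tau$ (which by itself would give only the dihedral group, as you observed) with the Dehn twist $B$ about a $\tau$-invariant curve $b$; one checks $(\tau B)^2=\mathrm{Id}$ because $\tau B\tau^{-1}=B^{-1}$. With this choice the short words $\sigma^{k}(\tau B)\sigma^{k}(\tau B)$ are not single twists but products $B_0B_k^{-1}$ of a twist and an inverse twist about disjoint $\sigma$-translates of $b$. The proof then runs through an explicit curve-by-curve argument: first one gets all $B_iB_{i+k}^{-1}$ for suitable $k$, then uses carefully chosen conjugations to obtain mixed products $A_mB_n^{-1}$, and finally invokes the \emph{lantern relation} (not just the Humphries chain) to extract a single Dehn twist $A_1$ from three such products. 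The hypothesis $g\ge 5$ enters in the disjointness checks needed for these conjugations, not in any abstract ``room for a chain'' sense. In short, the paper's insight is to bake a Dehn twist into the involution from the start rather than to hope a pure reflection will produce one by conjugation.
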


\section{Preliminaries}

\textbf{Notations:}

(a) We use the convention of functional notation, namely, elements of
the mapping class group are applied right to left, i.e. the composition $FG$ means
that $G$ is applied first.

(b) A Dehn twist means a right-hand Dehn twist.

(c) We denote the curves by lower case letters $a$, $b$, $c$, $d$
(possibly with subscripts) and the Dehn twists about them by the corresponding
capital letters $A$, $B$, $C$, $D$. Notationally we do not distinguish a
diffeomorphism/curve and its isotopy class.

We recall the following results (see, for instance, section 3.3, 5.1, 7.5 of \cite{FM}):

\begin{lemma}
For any $\varphi \in \text{Mod}(S_g)$ and any isotopy classes $a, b$
of simple closed curves  in $S_g$ satisfying $\varphi(a)=b$, we have:
$$B=\varphi\,A\,\varphi^{-1}.$$
\end{lemma}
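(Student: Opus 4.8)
The plan is to prove this by working with concrete representatives and tracking the annular support of a Dehn twist. First I would recall the definition: the Dehn twist $A$ is represented by a homeomorphism $t_a$ that is supported in a closed annular neighborhood $N_a$ of the curve $a$, identified with $S^1\times[0,1]$, where $t_a$ is the identity on $\partial N_a$ and effects one full right-hand twist across $N_a$. Likewise $B$ is represented by the analogous twist $t_b$ supported in an annular neighborhood $N_b$ of $b$.

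Next I would pick a homeomorphism $f$ representing $\varphi$. Since $\varphi(a)=b$ as isotopy classes, $f(a)$ is isotopic to $b$; after adjusting $f$ within its isotopy class I may assume $f(a)=b$ on the nose and that $f(N_a)=N_b$ is the chosen annular neighborhood of $b$. The key observation is that the conjugate $f\,t_a\,f^{-1}$ is supported in $f(N_a)=N_b$: indeed, if $x\notin N_b$ then $f^{-1}(x)\notin N_a$, so $t_a(f^{-1}(x))=f^{-1}(x)$ and hence $f\,t_a\,f^{-1}(x)=x$.

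It then remains to identify $f\,t_a\,f^{-1}$, restricted to $N_b$, as a single right-hand twist. Conjugating the twist map by the homeomorphism $f|_{N_a}\colon N_a\to N_b$ visibly produces a homeomorphism of $N_b$ that is a full twist; the only thing left to check is the handedness. Here I would use that $\varphi\in\text{Mod}(S_g)$ is orientation-preserving, so $f$ preserves the orientation of the annulus and therefore carries a right-hand twist to a right-hand twist. Thus $f\,t_a\,f^{-1}$ represents exactly $T_b=B$, giving $B=\varphi\,A\,\varphi^{-1}$.

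The main obstacle is precisely this orientation bookkeeping: an orientation-reversing representative would send a right-hand twist to a left-hand twist, yielding $B^{-1}$ rather than $B$, so the orientation-preserving hypothesis is the essential input. I would also note that the conclusion is independent of the chosen representatives and neighborhoods, since isotopic curves have equal isotopy classes of Dehn twists and conjugation is well defined on the mapping class group.
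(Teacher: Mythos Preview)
Your proof is correct and is exactly the standard argument (as in Farb--Margalit, \emph{A Primer on Mapping Class Groups}, \S3.3). Note that the paper itself does not give a proof of this lemma: it is simply recalled with a reference, so there is no ``paper's own proof'' to compare against beyond the cited source, which your argument matches.
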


\begin{lemma}
For any $\varphi \in \text{Mod}^{\pm}(S_g)\setminus\text{Mod}(S_g)$ and any isotopy classes $a, b$
of simple closed curves  in $S_g$ satisfying $\varphi(a)=b$, we have:
$$B^{-1}=\varphi\,A\,\varphi^{-1}.$$
\end{lemma}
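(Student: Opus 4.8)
The plan is to reduce the orientation-reversing case to the orientation-preserving statement already recorded in Lemma 2.1. The only genuinely new ingredient is the local fact that reversing the orientation of the surface turns a right-hand Dehn twist into a left-hand one; everything else is formal manipulation built on top of that observation.

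First I would isolate the key auxiliary claim: for the given curve $a$ there exists an orientation-reversing homeomorphism $r$ of $S_g$ fixing $a$ setwise such that, as mapping classes,
$$r\,A\,r^{-1}=A^{-1}.$$
To establish this I would work in a closed annular neighborhood $N$ of $a$ and take $r$ to restrict on $N$ to a reflection fixing $a$, extending $r$ arbitrarily to an orientation-reversing homeomorphism on the complement of $N$. In the standard oriented model of the annulus the right-hand Dehn twist $A$ is a shear; conjugating by the reflection reverses the sense of that shear, so $r\,A\,r^{-1}$ is the left-hand twist about $a$, namely $A^{-1}$. This is the heart of the argument and the one place where orientation is actually computed.

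With $r$ in hand I would set $\psi:=\varphi\,r$. Since both $\varphi$ and $r$ reverse orientation, $\psi$ preserves it, so $\psi\in\text{Mod}(S_g)$; moreover $\psi(a)=\varphi(r(a))=\varphi(a)=b$ because $r$ fixes $a$. Lemma 2.1 therefore applies to $\psi$ and yields $\psi\,A\,\psi^{-1}=B$. Unwinding the definition $\psi=\varphi\,r$ gives
$$\varphi\,(r\,A\,r^{-1})\,\varphi^{-1}=B,$$
and substituting $r\,A\,r^{-1}=A^{-1}$ from the auxiliary claim produces $\varphi\,A^{-1}\,\varphi^{-1}=B$, hence $\varphi\,A\,\varphi^{-1}=B^{-1}$, as required.

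The main obstacle is precisely the auxiliary claim $r\,A\,r^{-1}=A^{-1}$, where the reversal of orientation enters through the local computation in the annulus; note that this claim is proved directly for the single explicit map $r$, so there is no circularity in bootstrapping from it to arbitrary $\varphi$ via Lemma 2.1. The remaining steps are purely formal, with the hypothesis $\varphi\in\text{Mod}^{\pm}(S_g)\setminus\text{Mod}(S_g)$ used only to guarantee that $\psi=\varphi\,r$ lies in $\text{Mod}(S_g)$ so that Lemma 2.1 is applicable.
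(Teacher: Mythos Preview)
Your argument is correct. Note, however, that the paper does not actually prove this lemma: it is stated among several facts ``recalled'' with a reference to Farb--Margalit, so there is no in-paper proof to compare against.

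One point deserves tightening. When you build $r$ by fixing a reflection on the annular neighborhood $N$ and then ``extending $r$ arbitrarily to an orientation-reversing homeomorphism on the complement of $N$,'' the existence of an extension matching the prescribed boundary behaviour is not automatic and should be justified. The cleanest fix avoids the piecewise construction altogether: realize $S_g$ symmetrically in $\mathbb{R}^3$ with $a$ lying in a plane of reflection (possible because simple closed curves of a given topological type form a single orbit under $\text{Homeo}(S_g)$), and take $r$ to be that reflection. Then $r$ is globally defined, orientation-reversing, fixes $a$, and your local annulus computation giving $rAr^{-1}=A^{-1}$ goes through verbatim.

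For comparison, the standard textbook route is more direct and does not pass through Lemma~2.1: since $A$ is supported in an annular neighborhood $N_a$ of $a$, the conjugate $\varphi A\varphi^{-1}$ is supported in $\varphi(N_a)$, hence is a twist about $b$; the handedness of a Dehn twist is defined relative to the ambient orientation, and because $\varphi$ reverses that orientation it carries the right twist about $a$ to the left twist about $b$, i.e.\ to $B^{-1}$. Your approach trades this single orientation observation for an explicit model map $r$ plus a formal reduction to the orientation-preserving case; both are valid, and yours has the virtue of isolating the orientation computation in one concrete instance.
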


\begin{lemma}
Let $a, b$ be two simple closed curves on $S_g$. If $a$ is disjoint from $b$, then
$$AB=BA.$$
\end{lemma}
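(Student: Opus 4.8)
The plan is to prove the identity already at the level of representative homeomorphisms by arranging that $A$ and $B$ are realized by maps with \emph{disjoint supports}, and then to invoke the elementary fact that homeomorphisms with disjoint supports commute. Since $a$ and $b$ are disjoint as isotopy classes, I would first fix simple closed curves representing them that are genuinely disjoint on $S_g$; the hypothesis that the geometric intersection number is zero guarantees such representatives exist, so from now on $a$ and $b$ denote disjoint embedded curves.

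Next I would choose closed annular (tubular) neighborhoods $N_a \supset a$ and $N_b \supset b$ that are themselves disjoint. This is possible because $a$ and $b$ are disjoint compact $1$-submanifolds of $S_g$, so with respect to any fixed Riemannian metric they are a positive distance apart, and one may shrink the tubular neighborhoods until $N_a \cap N_b = \emptyset$. By definition the Dehn twist $A$ is represented by a homeomorphism $\tau_a$ that performs the twist inside the annulus $N_a$ and is the identity on $S_g \setminus N_a$, so $\text{supp}(\tau_a) \subset N_a$; symmetrically $B$ is represented by $\tau_b$ with $\text{supp}(\tau_b) \subset N_b$.

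With the supports separated I would check $\tau_a \tau_b = \tau_b \tau_a$ pointwise: for $x \in N_a$, the map $\tau_b$ fixes both $x$ and $\tau_a(x) \in N_a$, so each composite sends $x$ to $\tau_a(x)$; for $x \in N_b$ the symmetric computation applies; and for $x$ outside both neighborhoods both maps fix $x$. Hence $\tau_a \tau_b = \tau_b \tau_a$ as homeomorphisms, and passing to isotopy classes yields $AB = BA$ in $\text{Mod}^{\pm}(S_g)$. The argument is routine once the supports are separated, so the only point meriting care — the main obstacle, such as it is — is the initial reduction from disjoint isotopy classes to disjoint geometric representatives admitting disjoint annular neighborhoods; everything after that is the standard disjoint-support commutation.
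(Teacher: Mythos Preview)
Your argument is correct and is exactly the standard proof: realize the two Dehn twists by homeomorphisms supported in disjoint annular neighborhoods of disjoint representatives, and use that maps with disjoint support commute pointwise. The paper does not supply its own proof of this lemma; it merely records it as a well-known fact with a reference to Farb--Margalit's \emph{Primer on Mapping Class Groups}, where precisely the disjoint-support argument you wrote is given. So there is nothing to compare --- you have filled in the omitted standard proof.
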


\vspace{0.3cm}

\centerline{\includegraphics[totalheight=4cm]{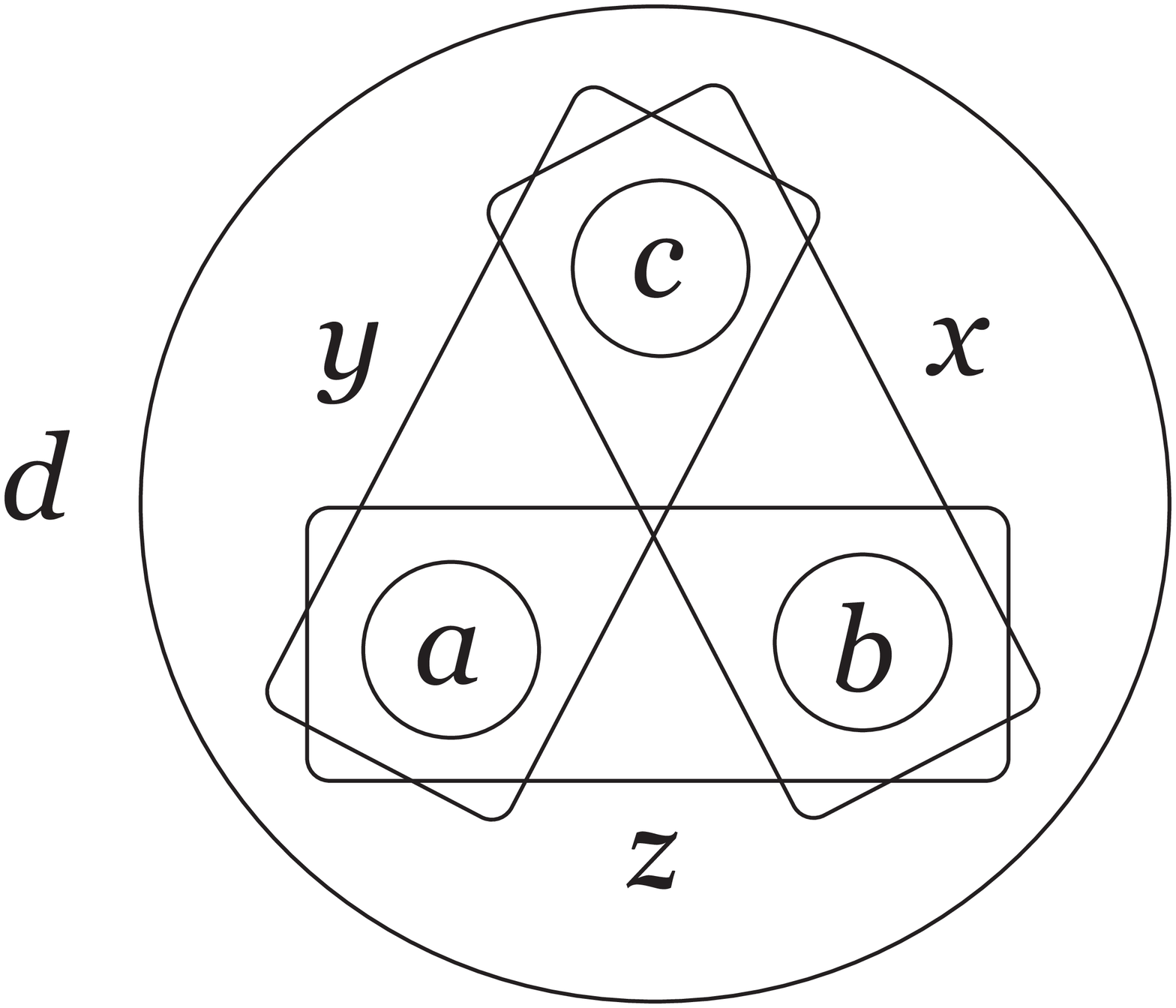}}

\centerline{Figure 1}

\begin{lemma}[Lantern relation]
Let $a, b, c, d, x, y, z$ be the curves showed in Figure 1 on a genus zero
surface with four boundaries. Then
$$ABCD=XYZ.$$
In other words, since $a, b, c$ are disjoint from $x, y, z$, we have
$$D=(XA^{-1})(YB^{-1})(ZC^{-1}).$$
\end{lemma}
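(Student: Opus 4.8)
The plan is to prove the core identity $ABCD=XYZ$ by the Alexander method and then derive the second displayed identity as a purely algebraic consequence. For that derivation, note that by hypothesis each of $a,b,c$ is disjoint from each of $x,y,z$, so by the commutativity of disjoint twists (Lemma 2.3) each of $A,B,C$ commutes with each of $X,Y,Z$; moreover $a,b,c$ are pairwise disjoint (distinct boundary-parallel curves), so $A,B,C$ also commute with one another. Granting $ABCD=XYZ$, I would write $D=(ABC)^{-1}XYZ$ and then move the factors $A^{-1},B^{-1},C^{-1}$ rightward past $X,Y,Z$ using these commutations, arriving at $D=XYZ(ABC)^{-1}=(XA^{-1})(YB^{-1})(ZC^{-1})$. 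Note that this step does not require $X,Y,Z$ to commute (they do not), only that $A,B,C$ commute with each $X,Y,Z$ and with each other. The only genuine content is therefore the relation $ABCD=XYZ$.

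To establish $ABCD=XYZ$ I would invoke the fact that a mapping class of a compact surface with boundary is determined, up to isotopy rel boundary, by its action on a system of essential arcs whose complement is a disjoint union of disks (the Alexander method). First I would fix explicit coordinates on the four-holed sphere of Figure 1, recording that $a,b,c,d$ are boundary-parallel while $x,y,z$ are the three interior curves, each enclosing a distinct pair of boundary circles. Since every Dehn twist here can be taken to fix the boundary pointwise, both words $ABCD$ and $XYZ$ fix the boundary pointwise, so it suffices to choose a small collection of arcs joining the boundary circles that cut the lantern into disks, and then to verify that $ABCD$ and $XYZ$ send each such arc to the same arc up to isotopy fixing endpoints.

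The computation of these arc images is where the work lies. For each arc I would apply the twists one at a time, using the standard local model in which an arc crossing the twisting curve once acquires a parallel copy of that curve spliced in (with sign fixed by the right-hand convention of Notation (b)); the intersections with the boundary-parallel curves and with the interior curves $x,y,z$ must be tracked carefully and the resulting arcs reduced up to isotopy. The main obstacle is precisely this bookkeeping: keeping the geometric intersection numbers under control through a product of four (respectively three) twists, and correctly simplifying the image arcs, is delicate and is the step most prone to error. Once the images under $ABCD$ and under $XYZ$ are seen to coincide on a filling arc system, the Alexander method yields $ABCD=XYZ$, which together with the algebraic rearrangement above completes the proof.
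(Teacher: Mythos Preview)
Your proposal is correct, and in fact you supply more than the paper does: the paper does not prove this lemma at all. It is listed among the preliminaries introduced by ``We recall the following results (see, for instance, section 3.3, 5.1, 7.5 of \cite{FM})'', so the lantern relation is simply quoted from Farb--Margalit without argument. Your Alexander-method verification on a filling arc system is exactly the standard proof given in that reference, and your algebraic derivation of the second displayed identity from the first, using only that the boundary-parallel twists $A,B,C$ commute with each other and with $X,Y,Z$, is clean and correct. So there is no discrepancy in approach to flag---you have written out what the paper merely cites.
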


The elements having the form of $UV^{-1}$ for some disjoint simple closed
curves $u,v$ play a crucial role in our proof.
The proof of the main result relies on Humphries' theorem:

\begin{theorem}[Humphries]
Let $a_1, a_2, \dots\, a_{2g}, b$ be the curves as on the left-hand side of Figure 2.
Then the mapping class group $\text{Mod}(S_g)$ is generated by $A_i$'s and $B$.
\end{theorem}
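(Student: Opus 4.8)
The plan is to start from Lickorish's Dehn-twist generating set and then use the lantern relation to delete all but one of its ``extra'' twists, landing on the Humphries set. Recall from Dehn--Lickorish that $\text{Mod}(S_g)$ is generated by Dehn twists about $3g-1$ simple closed curves, which may be taken to be the chain $a_1,\dots,a_{2g}$ together with $g-1$ auxiliary curves $c_1,\dots,c_{g-1}$, where $c_k$ encircles the $k$-th and $(k{+}1)$-st handles. Arranging the left-hand side of Figure 2 so that the Humphries curve $b$ is exactly $c_1$, it then suffices to show that the remaining $g-2$ auxiliary twists $C_2,\dots,C_{g-1}$ all lie in the subgroup $H=\langle A_1,\dots,A_{2g},B\rangle$; granting this, $H$ contains the full Lickorish set and hence $H=\text{Mod}(S_g)$.

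The engine of the reduction is the lantern relation (Lemma 2.4): on an embedded four-holed sphere it rewrites one Dehn twist as a product of six others, $D=(XA^{-1})(YB^{-1})(ZC^{-1})$, exactly the $UV^{-1}$-type factors flagged just before the statement. For each $k\ge 2$ I would locate a four-holed sphere in $S_g$ whose seven lantern curves are the target $c_k$ together with chain curves $a_i$ and the previous auxiliary curve $c_{k-1}$, arranged so that $c_k$ plays the role of $d$ while the other six curves are already accessible to $H$. Solving the relation for $C_k$ then writes it as a product of twists about those six curves. An induction on $k$, anchored at $C_1=B\in H$, now goes through: at each stage every factor is a chain twist $A_i\in H$, the twist $B\in H$, or a twist $C_j$ with $j<k$ already placed in $H$ (possibly transported into standard position by conjugating with an element of $H$, using the conjugation lemma, Lemma 2.1), so that $C_k\in H$ as well.

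I expect the main obstacle to be geometric bookkeeping rather than group theory. At every stage one must genuinely embed a four-holed sphere in $S_g$ in which all seven lantern curves sit simultaneously in the required position, with six of them accessible to $H$ and only the seventh, $c_k$, new; producing such configurations uniformly in $k$ is the crux. The change-of-coordinates principle supplies mapping classes carrying one configuration to another, but to apply Lemma 2.1 and keep the conjugated twist inside $H$ I must check that each conjugating map is itself a product of the generators $A_i$ and $B$. Verifying this containment, and ordering the lanterns so that the auxiliary curves are consumed in the correct inductive sequence, is the delicate part; the commutation relation for disjoint curves (Lemma 2.3) will be used repeatedly to rearrange the factors and to recognize when two configurations differ only by a twist already known to lie in $H$.
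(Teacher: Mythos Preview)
The paper does not prove this statement; it is quoted as a known result from Humphries' original paper \cite{Hu} and used as a black-box input to the proof of Theorem~\ref{main}. So there is no proof in the paper to compare against.

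That said, your outline is the standard argument and is essentially Humphries' own: start from the Lickorish generating set $\{A_1,\dots,A_{2g},C_1,\dots,C_{g-1}\}$, identify $B$ with one of the $C_k$, and use a lantern relation at each step to write $C_k$ as a product of chain twists and $C_{k-1}$, proceeding by induction. Your identification of the delicate point is also accurate: one must exhibit, for each $k$, a four-holed sphere in $S_g$ whose seven lantern curves are $c_k$ together with six curves whose twists already lie in $H$, and verify that any conjugators used to move curves into position are themselves products of generators of $H$. These verifications are routine once the correct lanterns are drawn; explicit pictures appear in \cite{Hu} and in \cite{FM}. One small inaccuracy: in the curve system of this paper $b$ meets $a_4$ once, so with the usual Lickorish labelling $b$ corresponds to $c_2$ rather than $c_1$; this does not affect the argument, since any single $c_j$ together with the chain suffices, but you should be consistent when you set up the induction.
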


\centerline{\includegraphics[totalheight=3.5cm]{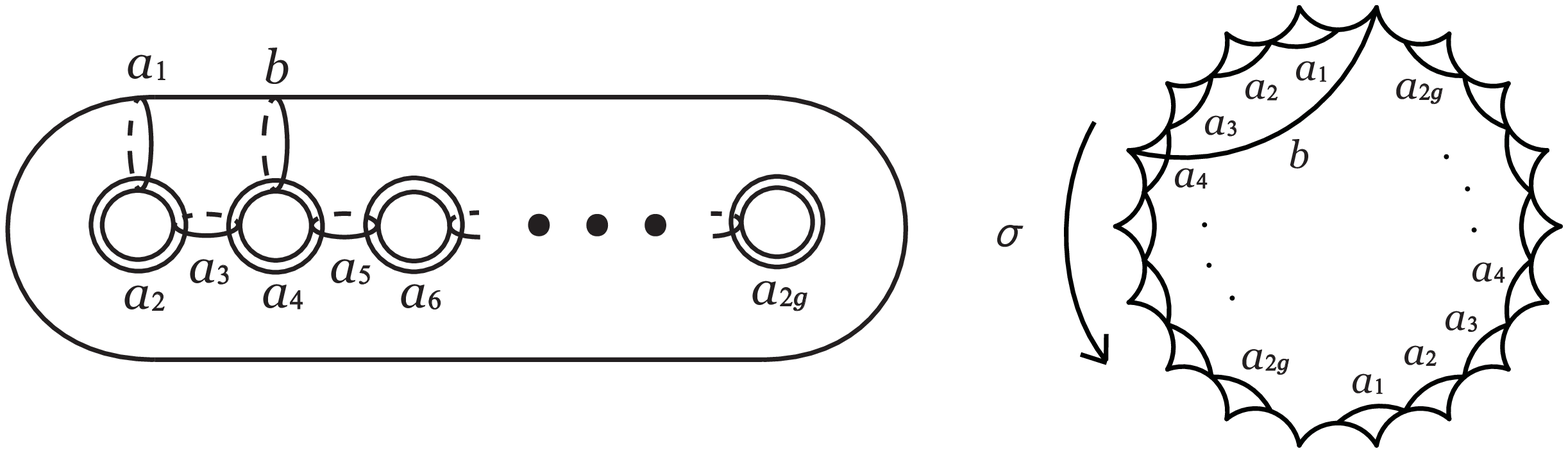}}

\centerline{Figure 2}

Consider the genus $g$ surface as a regular $(4g+2)$-gon whose corresponding
opposite sides are glued together, as indicated on the right-hand side of Figure 2.
We redraw the curves of Humphries' generating set on the right-hand side of Figure 2 as follow.
The set of curves $a_i$'s on the left-hand side of Figure 2 is a chain of simple
closed curves and fills the surface. We find a chain of simple closed curves which
also fills the surface on the right-hand side of Figure 2, identify them with $a_i$'s.
To see the corresponding curve of $b$, notice that $b$ intersects $a_4$ once and is disjoint
from other $a_i$'s. Then find such a curve on the right-hand side of Figure 2.

Look at the $(4g+2)$-gon. There is a natural $2\pi/(4g+2)$ rotation preserving the gluing
way of the $(4g+2)$-gon. This rotation induces a period map $\sigma$ of the genus $g$
surface. Moreover, $\sigma(a_{i})=a_{i+1}$ for $1 \leq i \leq 2g-1$.
Take $a_0=\sigma(a_{2g})$. Then we have $a_1=\sigma(a_0)$.
Under modulo $2g+1$, we have $a_k=\sigma^{k}(a_0)$ for all integer $k$
and $a_{i+1}=\sigma(a_{i})$ still holds for all $i$'s.
Similarly, since $\sigma$ is of order $4g+2$, under modulo $4g+2$,
we take $b_0=b$ and $b_k=\sigma^{k}(b)$. Then we have $\sigma(b_j)=b_{j+1}$ for all $j$'s.

To see back the image of $b_j$'s on the left-hand side of Figure 2,
since the set of $a_k$'s form a chain of simple closed curves filling the surface,
we only need to calculate the geometric intersection number $i(b_j,a_k)$
for each $k$ on the right-hand side of Figure 2,
then find a curve with the same geometric intersection numbers
with $a_k$'s as on the left-hand side of Figure 2.
Figure 3 shows this way to think of the images of $b_j$'s.
Figure 2 and Figure 3 will be used in the proof of Theorem \ref{main} to verify
the disjointness between some $a_i$ and $b_j$.

\vspace{0.3cm}

\centerline{\includegraphics[totalheight=7cm]{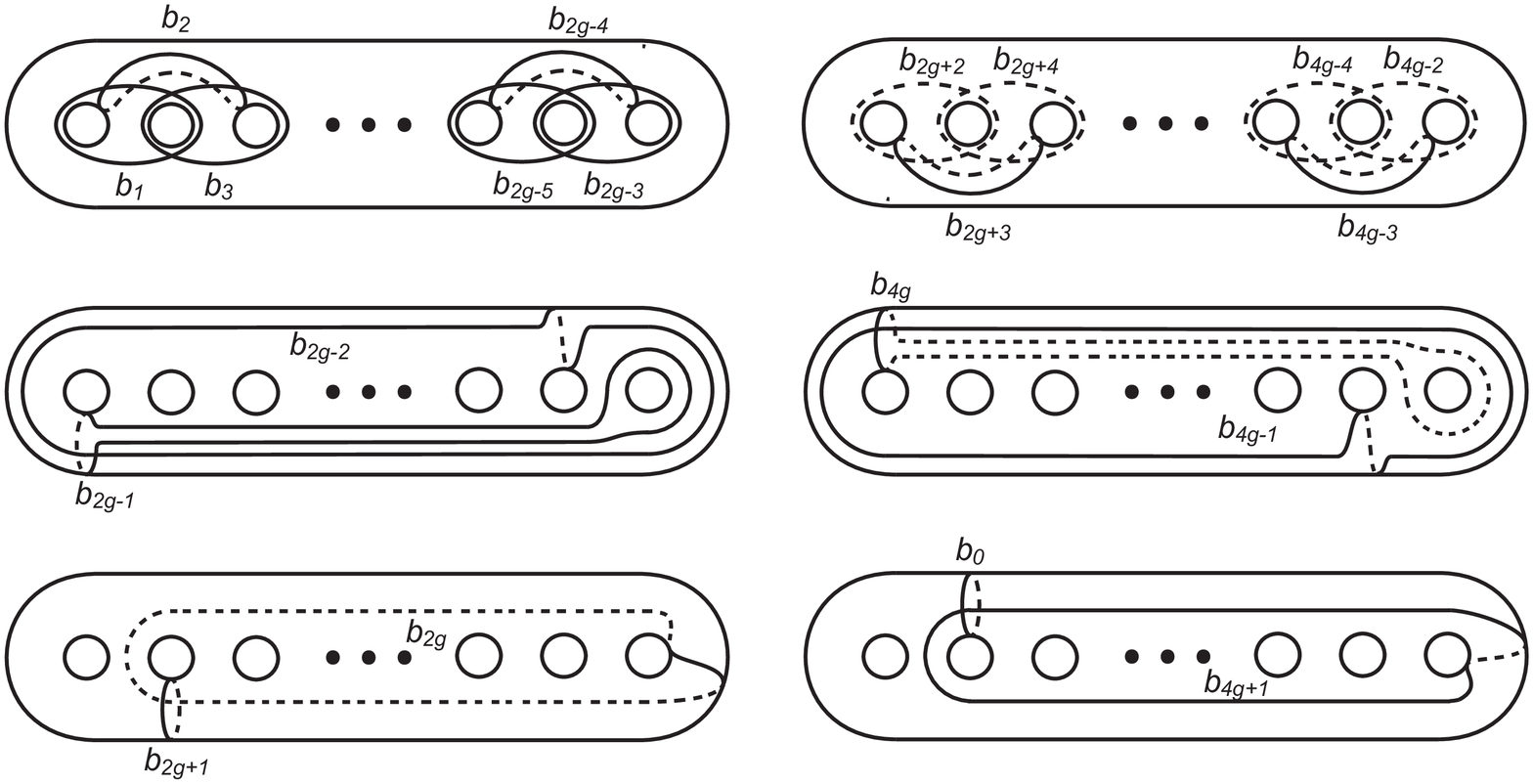}}

\vspace{0.3cm}

\centerline{Figure 3}

\section{The main result and the proof}

Let $\tau$ be the symmetry of the $(4g+2)$-gon as on the left-hand side of Figure 4.
Then $\tau$ induces an orientation-reversing homeomorphism of the genus $g$ surface.
The fixed point set of $\tau$ forms a non-separating curve.
A part of such a non-separating curve is along the axis of the symmetry.
We still denote the homotopy class of such a homeomorphism as $\tau$.
Since $\tau$ preserves the curve $b$, it is easy to check that $(\tau \circ B)^2=Id$.

\vspace{0.3cm}

\centerline{\includegraphics[totalheight=3.5cm]{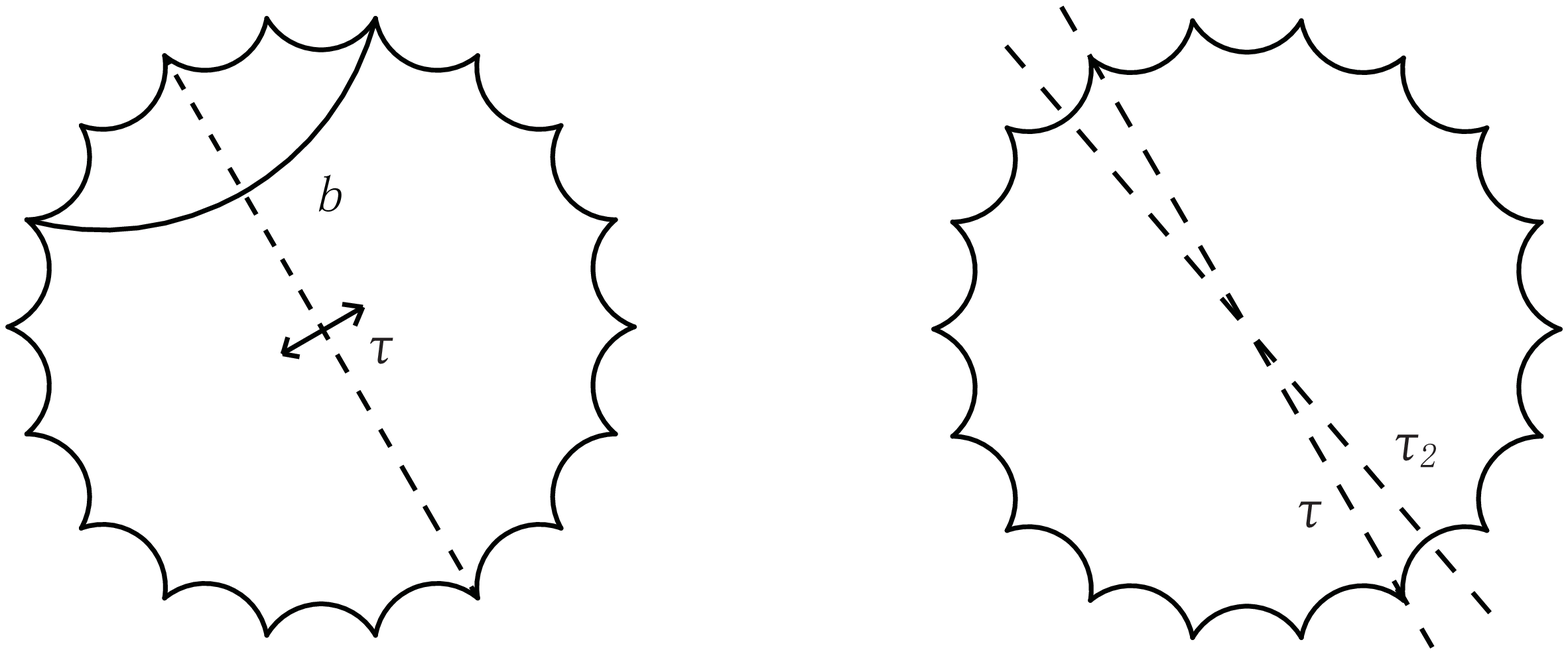}}

\vspace{0.3cm}

\centerline{Figure 4}

\begin{theorem}\label{main}
Let $\sigma$ be the element of order $4g+2$ and $\tau \circ B$ be the element of
order $2$ as we described above. Then for $g \geq 5$ we have
$\text{Mod}^{\pm}(S_g) = \langle \sigma, \tau \circ B \rangle$.
\end{theorem}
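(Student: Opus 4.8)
The plan is to show that $\langle \sigma, \tau\circ B\rangle$ already contains the whole orientation-preserving subgroup $\text{Mod}(S_g)$. Since $\tau\circ B$ is orientation-reversing and $\text{Mod}(S_g)$ has index $2$ in $\text{Mod}^{\pm}(S_g)$, this immediately forces $\langle\sigma,\tau\circ B\rangle=\text{Mod}^{\pm}(S_g)$. By Humphries' theorem it then suffices to prove that all the Dehn twists $A_1,\dots,A_{2g}$ and $B$ lie in the group. Because $\sigma$ permutes each of the two orbits cyclically, namely $\sigma A_i\sigma^{-1}=A_{i+1}$ and $\sigma B_j\sigma^{-1}=B_{j+1}$ by Lemma for orientation-preserving maps, it is enough to manufacture a single honest Dehn twist and then spread it around by conjugation. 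The real work is extracting that first twist from two torsion elements, neither of which is itself a twist.

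First I would manufacture the ``difference of twists'' elements advertised just before the statement. Writing $t=\tau\circ B$, which is an involution so that $t^{-1}=t$, and using that $\tau$ fixes $b$ so that $\tau B\tau^{-1}=B^{-1}$ by the orientation-reversing Lemma, together with the dihedral relation $\tau\sigma^{k}\tau^{-1}=\sigma^{-k}$ coming from the symmetries of the $(4g+2)$-gon, I would compute
\[
 t\,\sigma^{k}\,t^{-1}=B^{-1}\sigma^{-k}B,\qquad\text{hence}\qquad \bigl(t\,\sigma^{k}\,t^{-1}\bigr)\,\sigma^{k}=B_0^{-1}B_{-k}.
\]
Conjugating by powers of $\sigma$ then yields $B_j^{-1}B_{j-k}\in\langle\sigma,t\rangle$ for all $j$ and $k$. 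Whenever $b_j$ and $b_{j-k}$ are disjoint these are precisely elements of the form $UV^{-1}$ for disjoint curves, and the required disjointness is read off from Figures 2 and 3.

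The decisive step is the lantern relation. I would locate, among the curves $\{a_i,b_j\}$, a genus-zero subsurface with four boundary components whose lantern data $D=(XA^{-1})(YB^{-1})(ZC^{-1})$ expresses a single Dehn twist $D$ as a product of three of the $UV^{-1}$-elements already shown to lie in the group. Once one genuine twist $D$ is in hand, conjugating by powers of $\sigma$ produces twists about its entire $\sigma$-orbit, while conjugating by $t$ inverts twists by the orientation-reversing Lemma and supplies the remaining ones; iterating the lantern construction now that honest twists are available yields the full Humphries set $A_1,\dots,A_{2g},B$. This gives $\text{Mod}(S_g)\subseteq\langle\sigma,\tau\circ B\rangle$, and including the orientation-reversing element $\tau\circ B$ completes the argument.

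I expect the main obstacle to be the geometric bookkeeping in the lantern step: one must exhibit an explicit four-holed sphere among the $a_i$ and $b_j$ for which all three factors of the lantern relation are $UV^{-1}$-elements already produced, and then verify every required disjointness and geometric intersection number. This is exactly where the hypothesis $g\geq 5$ enters, guaranteeing enough curves in the two $\sigma$-orbits to realize such a configuration, and where Figures 2 and 3 do the verification.
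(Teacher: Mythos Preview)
Your overall architecture is the paper's: manufacture $B_iB_j^{-1}$ elements from the dihedral relation, feed a lantern relation to extract one honest Dehn twist, then spread by $\sigma$. Your computation $t\,\sigma^k\,t\,\sigma^k = B_0^{-1}B_{-k}$ is correct and is essentially the paper's Step~1.

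The genuine gap is the jump from the $B_iB_j^{-1}$ directly to a lantern. The lantern the paper actually uses has boundary curves $a_1,a_3,a_5,f$ and interior curves $b_0,b_2,e$, yielding $A_1=(B_0A_3^{-1})(B_2A_5^{-1})(EF^{-1})$; two of the three factors are mixed elements $B_nA_m^{-1}$, and the third involves curves $e,f$ lying in \emph{neither} $\sigma$-orbit. So the lantern cannot be invoked until one has an intermediate step (the paper's Step~2): prove $A_mB_n^{-1}\in G$ whenever $a_m$ and $b_n$ are disjoint. This is where most of the geometric work, and the first appeal to $g\ge 5$, actually sits. The mechanism is to exhibit, inside $G$, an explicit product of the already-available $B_iB_j^{-1}$'s that carries some disjoint pair $(b_i,b_{i+k})$ to a pair $(a_m,b_n)$, so that conjugation turns $B_iB_{i+k}^{-1}$ into $A_mB_n^{-1}$; the paper does this by a concrete chain of such moves (handled separately for odd and even genus) that unwinds a specific $b$-curve down to $a_3$ or $a_4$, and then propagates to all admissible $(m,n)$ via the identity $(\tau B)\,\sigma^k\,(\tau B)\,(A_iB_0^{-1})\,\sigma^k = A_{i-k}B_0^{-1}$. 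These same mixed elements are then reused to manufacture $EF^{-1}$ and, after the lantern delivers $A_1$, to recover $B_0$ from $A_1$ and $A_1B_0^{-1}$. You are right that the crux is geometric bookkeeping, but it is this conversion from $B$-pairs to $(A,B)$-pairs, not merely the lantern placement, that carries it; as written you have no $A_mB_n^{-1}$'s available, hence no lantern whose three factors are known to lie in $G$, and no route from the $A_k$'s back to $B$.
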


\begin{remark}
This generating set can also apply to $\text{Mod}^{\pm}(S_{g,1})$,
i.e., the extended mapping class group of the surface with one marked point.
This is because the element $\sigma$ and $\tau \circ B$ fix the center point of the $4g+2$-gon.
\end{remark}

\begin{remark}
From Theorem \ref{main} we can also deduce that $\text{Mod}^{\pm}(S_g)$ can be
generated by 3 symmetries when $g \geq 5$. The reason is that $\sigma$ is like
the rotation of the $(4g+2)$-gon.
In the dihedral group of the $(4g+2)$-gon, the rotation is the product
of two reflections (see the right-hand side of Figure 4).
Such reflections of $(4g+2)$-gon induce orientation reversing
order 2 maps $\tau, \tau_2$ on the surface.
Hence $\text{Mod}^{\pm}(S_g) = \langle \tau \circ B, \tau, \tau_2 \rangle$.
So we can get another proof of the result in \cite{St} under the condition $g \geq 5$.
Our generating set is different from the generating set in \cite{St}.
The fixed-point sets of two of the symmetries in \cite{St} are separating curves.
The fixed-point sets of our symmetries are non-separating curves.
\end{remark}

\begin{proof}[Proof of Theorem \ref{main}]
Denote the subgroup generated by $\sigma$ and $\tau \circ B$ as $G$.
We prove that $G=\text{Mod}^{\pm}(S_g)$ in four steps:

Step 1. Under modulo $4g+2$, the following two conditions are equivalent:
(1) integers $i,k$ satisfy $k \in \{4, 5, 6, \dots, 4g-2\} \setminus \{2g-2, 2g, 2g+2, 2g+4\}$;
(2) $b_i$ is disjoint from $b_{i+k}$.
Under such conditions we have $B_i\,B_{i+k}^{-1}$ and $B_i^{-1}\,B_{i+k}$ are in $G$.

Step 2. Under modulo $2g+1$ for $m$ and modulo $4g+2$ for $n$, the following two conditions are equivalent:
(1) integers $m,n$ satisfy $m \not \in \{n, n+4\}$;
(2) $a_m$ is disjoint from $b_n$.
Under such conditions we have $A_m\,B_n^{-1}$ and $A_m^{-1}\,B_n$ are in $G$.

Step 3. Using the lantern relation, prove that for all $k$, $A_k \in G$.

Step 4. Finally, $G=\text{Mod}^{\pm}(S_g)$.

The proof of Step 1:

It is obvious that $b_0$ is disjoint from $b_k$ if and only if
$k \in \{4, 5, 6, \dots, 4g-2\} \setminus \{2g-2, 2g, 2g+2, 2g+4\}$ (see Figure 2 and Figure 3).
So do $b_i$ and $b_{i+k}$.
Under such conditions, we first prove that $B_0B_k^{-1} \in G$.
Consider the element $\sigma^{k}\,(\tau \circ B)\,\sigma^{k}\,(\tau \circ B)$.
In the dihedral subgroup of $\text{Mod}^{\pm}(S_g)$,
$\sigma^{k}\,\tau\,\sigma^{k}\,\tau$ is the identity.
After adding the Dehn twist $B$, since $b_k$ is disjoint from $b_0$,
we can easily check that $\sigma^{k}\,(\tau \circ B)\,\sigma^{k}\,(\tau \circ B)$ $=B_{0}B_{k}^{-1}$.
For every integer $i$, conjugate $B_{0}B_{k}^{-1}$ by $\sigma^{i}$.
Then we have $B_iB_{i+k}^{-1}$ is in $G$.
The commutativity of $B_i$ and $B_{i+k}$ promise $B_i^{-1}B_{i+k}$ is in $G$.

The proof of Step 2:

It is obvious that the disjointness between $a_i$ and $b_0$ is equivalent to $i \not \in \{0,4\}$.
So $a_{m}$ is disjoint from $b_n$ is equivalent to $m \not \in \{n,n+4\}$.
From these conditions, we also see that $a_i$ is disjoint from $b_0$ if and only if
$a_{4-i}$ is disjoint from $b_0$.

By the conjugacy relation, for some $i,k,m,n$ satisfying the conditions in Step 1 and Step 2,
if there exists $\varphi \in G$ and a pair of disjoint curves $(b_i,b_{i+k})$
such that $\varphi: (b_i, b_{i+k}) \mapsto (a_m, b_n)$, since $B_iB_{i+k}^{-1}$ is in $G$,
$A_mB_n^{-1}$ is also in $G$. We need to find such a $\varphi \in G$.

The proofs in odd genus case and in even genus case are slightly different.
Suppose first that the genus $g$ is odd. See Figure 5.

\vspace{0.5cm}

\centerline{\includegraphics[totalheight=6cm]{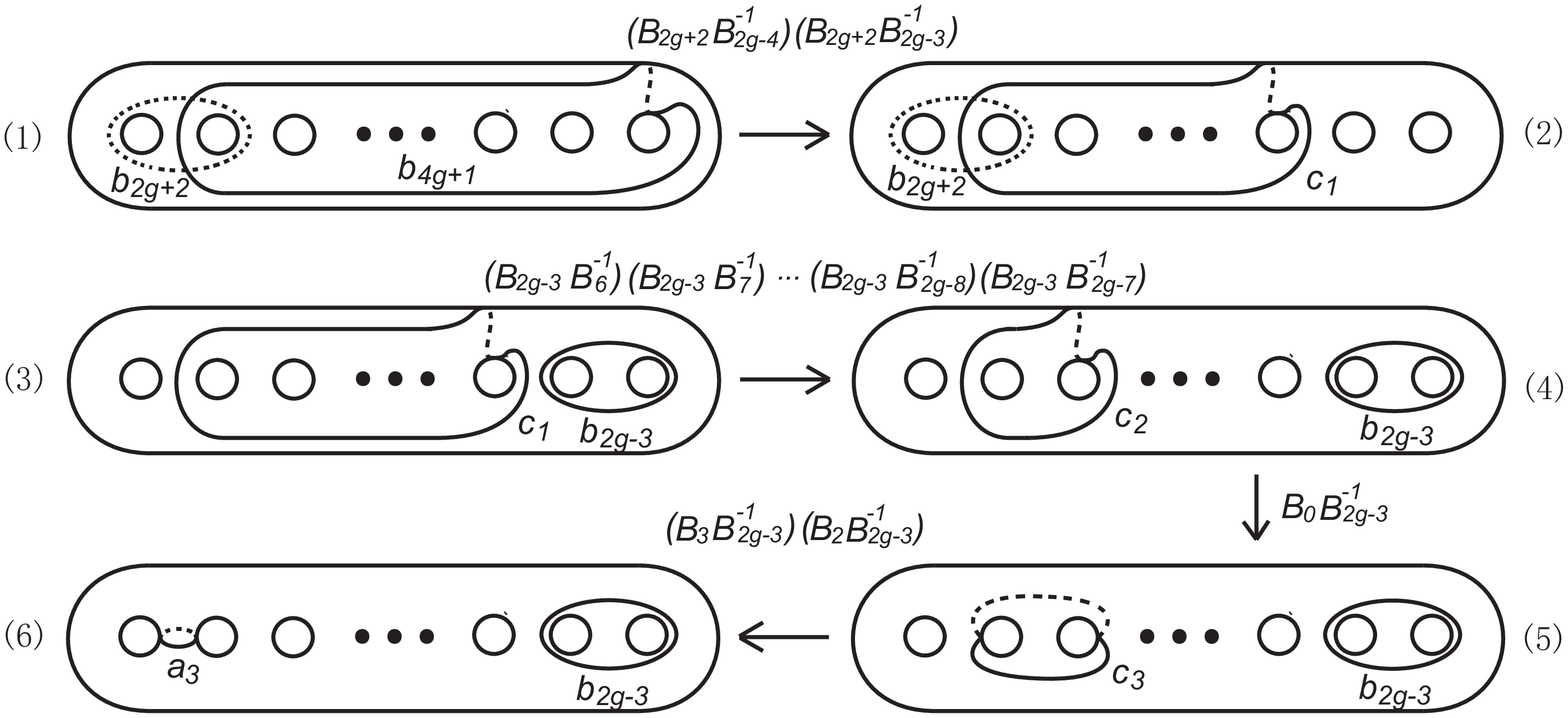}}

\centerline{Figure 5}

\vspace{0.5cm}

By Step 1, $B_{2g+2}B_{4g+1}^{-1}$ is in $G$.
If $g \geq 5$, $b_{2g+2}$ is disjoint from $b_{2g-3}, b_{2g-4}$.
Both $B_{2g+2}B_{2g-4}^{-1}$ and $B_{2g+2}B_{2g-3}^{-1}$ are in $G$.
The element $(B_{2g+2}B_{2g-4}^{-1})$ $\cdot$ $(B_{2g+2}B_{2g-3}^{-1})$
maps the pair of curves $(b_{2g+2},b_{4g+1})$ to the pair of curves $(b_{2g+2},c_1)$ as in Figure 5 (2).
Hence $B_{2g+2}C_1^{-1}$ is in $G$.
Now $B_{2g-3}B_{2g+2}^{-1}$ is also in $G$.
So $B_{2g-3}C_1^{-1}$ is in $G$.

The curve $b_{2g-3}$ is disjoint from $b_{2g-7}$, $b_{2g-8}, \dots, b_7$, $b_6$.
The elements $(B_{2g-3}B_6^{-1})$, $(B_{2g-3}B_7^{-1})\,\dots\,(B_{2g-3}B_{2g-8}^{-1})$,
$(B_{2g-3}B_{2g-7}^{-1})$ are in $G$. Then their product
$(B_{2g-3}B_6^{-1})$ $\cdot$ $(B_{2g-3}B_7^{-1})\,\dots\,(B_{2g-3}B_{2g-8}^{-1})$ $\cdot$ $(B_{2g-3}B_{2g-7}^{-1})$
maps the pair of curves $(b_{2g-3},c_1)$ to the pair of curves $(b_{2g-3},c_2)$ as in Figure 5 (4).
We have $B_{2g-3}C_2^{-1}$ is in $G$.

The curve $b_{2g-3}$ is disjoint from $b_3$ and $b_2$.
The elements $(B_{2g-3}B_2^{-1})$, $(B_{2g-3}B_3^{-1})$ are in $G$.
Their product $(B_3B_{2g-3}^{-1})$ $\cdot$ $(B_2B_{2g-3}^{-1})$ maps the pair of curves
$(b_{2g-3},c_2)$ to the pair of curves $(b_{2g-3},a_3)$.
So we get that $B_{2g-3}A_3^{-1}$ is in $G$. Its inverse $A_3B_{2g-3}^{-1}$ is also in $G$.

Suppose now that $g$ is even. See Figure 6.
\vspace{0.5cm}

\centerline{\includegraphics[totalheight=4cm]{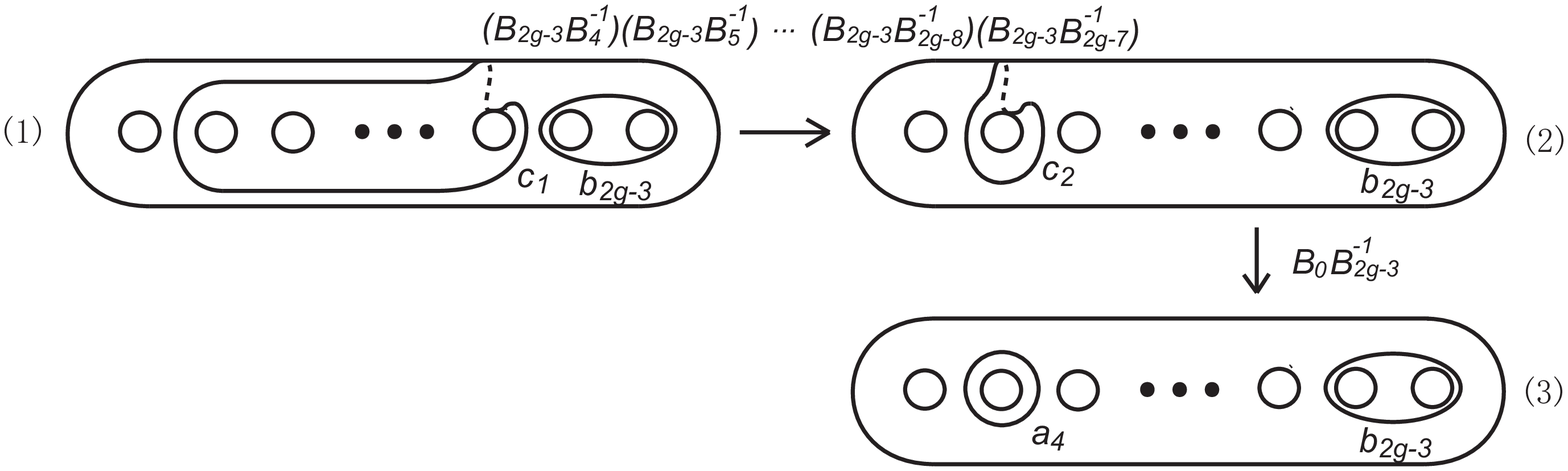}}

\centerline{Figure 6}

\vspace{0.5cm}
In this case, the proof of $B_{2g-3}C_1^{-1} \in G$ is the same as the odd genus case.
The element $(B_{2g-3}B_4^{-1})$ $\cdot$ $(B_{2g-3}B_5^{-1})\,\dots\,(B_{2g-3}B_{2g-8}^{-1})$ $\cdot$ $(B_{2g-3}B_{2g-7}^{-1})$
maps the pair of curves $(b_{2g-3},c_1)$ to the pair of curves $(b_{2g-3},c_2)$ as in Figure 6 (2).
The element $B_0B_{2g-3}^{-1}$ maps the pair of curves $(b_{2g-3},c_2)$ to the pair of curves $(b_{2g-3},a_4)$.
So we get that $B_{2g-3}A_4^{-1}$ is in $G$. Its inverse $A_4B_{2g-3}^{-1}$ is also in $G$.

The element $\sigma^{2g+5}$ maps $(a_3,b_{2g-3})$ to $(a_{2g+8},b_0)$ when $g$ is odd
and maps $(a_4,b_{2g-3})$ to $(a_{2g+9},b_0)$ when $g$ is even.
We have $A_iB_0^{-1}=B_0^{-1}A_i \in G$ for some $i$ where $a_i$ is disjoint from $b_0$.

Notice that $\tau\,\sigma^{k}\,\tau$ maps $a_i$ to $a_{i-k}$,
hence $A_{i-k} = (\tau\,\sigma^{k}\,\tau)\,A_i\,(\tau\,\sigma^{k}\,\tau)^{-1}$.
So as long as $a_{i-k}$ is disjoint from $b_0$, we have\\
$
\begin{array}{lll}
& & (\tau \circ B_0)\,\sigma^{k}\,(\tau \circ B_0)\,(A_iB_0^{-1})\,\sigma^{k} = (B_0^{-1} \circ \tau)\,\sigma^{k}\,(\tau \circ A_i)\,\sigma^{k}\\
&=& B_0^{-1}\,(\tau\,\sigma^{k}\,\tau)\,A_i\,\sigma^{k} = B_0^{-1}\,(A_{i-k}\,\tau\,\sigma^{k}\,\tau)\,\sigma^{k} = B_0^{-1}\,A_{i-k}=A_{i-k}\,B_0^{-1}.
\end{array}
$

Taking all possible $k$ and conjugating by $\sigma^n$,
we have all $A_mB_n^{-1} \in G$ for $m \not \in \{n, n+4\}$.
The commutativity of $A_m$ and $B_n$ promises $A_m^{-1}B_n$ is in $G$.

The proof of Step 3:

See Figure 7. There is a natural lantern lying on the surface,
bounded by $a_1, a_3, a_5$ and $f$.
By the lantern relation, we have $B_0B_2E=A_1A_3A_5F$, or
$A_1=(B_0A_3^{-1})(B_2A_5^{-1})(EF^{-1})$,
where $e$ and $f$ are the curves showed in Figure 7.
The elements $B_0A_3^{-1}$ and $B_2A_5^{-1}$ are the inverses of $A_3B_0^{-1}$ and $A_5B_2^{-1}$ respectively.
By the result of step 2, they are in $G$.
We only need to prove $EF^{-1}$ is also in $G$.

\vspace{0.3cm}

\centerline{\includegraphics[totalheight=2cm]{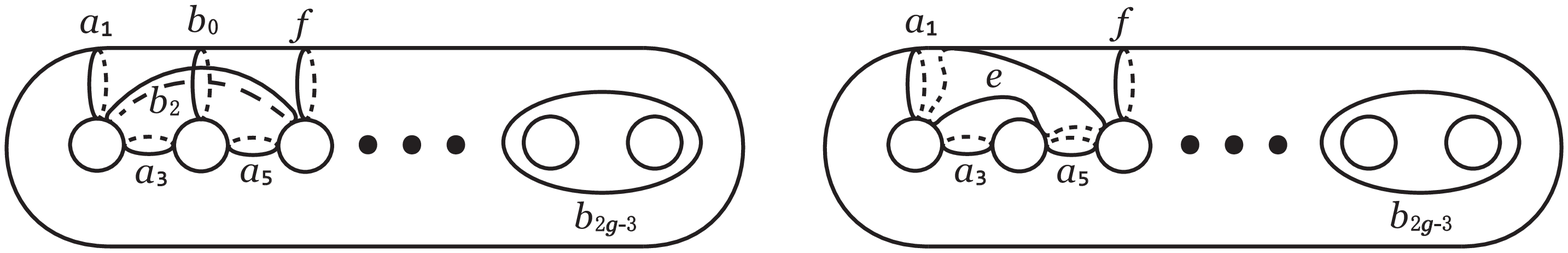}}

\centerline{Figure 7}

\vspace{0.3cm}

When $g \geq 5$, $b_{2g-3}$ is disjoint from $b_0, \dots, b_3, a_1, \dots, a_6, e, f$.
Notice $EF^{-1}=(EB_{2g-3}^{-1})(B_{2g-3}F^{-1})$.
We verify $EB_{2g-3}^{-1}$ and $B_{2g-3}F^{-1}$ are in $G$.
By the previous steps,
The element $(B_{2g-3}B_3^{-1})$ $\cdot$ $(A_6B_{2g-3}^{-1})$ $\cdot$ $(A_5B_{2g-3}^{-1})$ $\cdot$ $(A_4B_{2g-3}^{-1})$
is in $G$ and maps the pair of curves $(b_{2g-3},b_0)$ to $(b_{2g-3},f)$.
Notice $B_{2g-3}B_0^{-1}$ is in $G$. So $B_{2g-3}F^{-1}$ is in $G$.
The element $(A_2B_{2g-3}^{-1})$ $\cdot$ $(A_1B_{2g-3}^{-1})$ $\cdot$ $(A_4^{-1}B_{2g-3})$ $\cdot$ $(B_1B_{2g-3}^{-1})$
maps the pair of curves $(b_{2g-3},a_5)$ to $(b_{2g-3},e)$.
Notice $B_{2g-3}A_5^{-1}$ is in $G$. So $B_{2g-3}E^{-1}$ is in $G$.
We now get $EF^{-1}=(EB_{2g-3}^{-1})(B_{2g-3}F^{-1})$ is in $G$.

The proof of Step 4:

The fact that the elements $A_1$ and $A_1B_0^{-1}$ are in $G$ means that $B_0 \in G$.
Now all the curves $a_i$'s are in the same orbit of $\sigma$. So do $b_j$'s.
So all the $A_i$'s and $B_j$'s are in $G$. These include Humphries' generators of $\text{Mod}(S_g)$.
So $G$ contains $\text{Mod}(S_g)$.
Finally, one of the generator $\tau \circ B$ is an orientation-reversing mapping class.
Hence $G=\text{Mod}^{\pm}(S_g)$.

\end{proof}

\begin{remark}
The idea of Step 2 and Step 3 in the above proof is based on the method in
\cite{Ko1} to prove $\text{Mod}(S_g)$ is generated by two torsions of order $4g+2$.
\end{remark}

\end{document}